\def\red#1{\textcolor{red}{#1}}
\def\blue#1{\textcolor{blue}{#1}}
\title{An abstract formulation of the flat band condition}
\author{Jeffrey Galkowski}
\email{j.galkowski@ucl.ac.uk}
\address{Department of Mathematics, University College London, WC1H 0AY, UK}
\author{Maciej Zworski} 
\email{zworski@math.berkeley.edu}
\address{Department of Mathematics, University of California, Berkeley, CA 94720}
\begin{document}

\begin{abstract}
Motivated by the study of flat bands in models of twisted bilayer graphene (TBG), we give abstract conditions which guarantee the existence of a discrete set of parameters for which periodic Hamiltonians exhibit flat bands. As an application, we show that a scalar operator derived from the chiral model of TBG has flat bands for a discrete set of parameters. \end{abstract}

\maketitle

\section{Introduction}

Existence of flat bands for periodic operators (in the sense of Floquet theory) has interesting
physical consequences, especially in the case of nontrivial band topology. A celebrated
recent example is given by the Bistritzer--MacDonald Hamiltonian \cite{BM11} modeling twisted
bilayer graphene 
(see \cite{CGG} and \cite{wats} for its mathematical derivation).  A model exhibiting exact flat bands is given by the chiral limit of the Bistritzer--MacDonald
model considered by Tarnopolsky--Kruchkov--Vishwanath \cite{magic}. Both the Bistritzer--MacDonald model and its chiral limit depend on a parameter corresponding to the angle of twisting between two graphene sheets and, in the chiral model, the perfectly flat bands
appear for a discrete set of values of this parameter.
This follows from a spectral characterization of those {\em magic} angles
given by Becker--Embree--Wittsten--Zworski \cite{beta}. Existence of the first real magic angle
was provided by Watson--Luskin \cite{lawa}, with its simplicity established by 
Becker--Humbert--Zworski \cite{bhz1}. That paper also showed existence of infinitely many,
possibly complex, magic angles.

The purpose of this note is to provide a simple abstract version of the spectral 
characterization of magic angles given in \cite{beta} (see also \cite[Proposition 2.2]{bhz2}). In \S \ref{s:scal} we 
apply this spectral characterization of flat bands in a model to which the argument from \cite{beta} does not apply.

To formulate our result we consider Banach spaces, 
 $X\subset Y$,  and a  connected open set $ \Omega  \subset \mathbb C $. 
   The result concerns a holomorphic family of {\em Fredholm operators 
 of index $ 0 $}
 (see \cite[\S C.2]{res}):
 \begin{equation} 
\label{eq:defQ}
Q : \Omega \times \mathbb C \to \mathcal L ( X, Y ) , \ \ 
( \alpha , k ) \mapsto Q ( \alpha, k ) . 
\end{equation}
We make the following assumption: there exists a lattice
 $ \Gamma^* \subset \mathbb C $, 
and families of {\em invertible} operators $ \gamma \mapsto W_\bullet (\gamma ) : \bullet \to \bullet $, 
$ \bullet = X, Y $, $ \gamma \in \Gamma^* $, 
such that 
\begin{equation}
\label{eq:lattice} 
\begin{gathered}
Q ( \alpha , k + \gamma ) = W_Y ( \gamma )^{-1} Q ( \alpha, k ) W_X ( \gamma ) , \ \ 
\gamma \in \Gamma^*  . 
\end{gathered} 
\end{equation} 

A guiding example is given by the chiral model of twisted bilayer graphene (TBG) \cite{magic}, \cite{beta}, 
\cite{bhz2}:
\begin{equation}
\label{eq:defD}   
\begin{gathered} Q ( \alpha, k ) := D ( \alpha ) + k , \ \  
D ( \alpha ) := \begin{pmatrix} 2 D_{ \bar z } & \alpha U ( z ) \\
\alpha U ( - z ) & 2 D_{\bar z } \end{pmatrix} ,
\ \ \ \Omega  = \mathbb C , 
 \\  2D_{\bar z } = \tfrac 1 i ( \partial_{{x_1}} + i \partial_{x_2} ) , \ \
z = {x_1} + i x_2 \in \mathbb C , 
\end{gathered}
\end{equation}
where $ U $ satisfies 
\begin{equation}
\label{eq:defU}
\begin{gathered}   U ( z + \gamma ) = e^{ i \langle \gamma , K  \rangle } U ( z ) , \ \ U ( \omega z ) = \omega U(z) , \ \ \overline {U ( \bar z ) } = - U ( - z ) , \ \ \ 
\omega = e^{ 2 \pi i/3},  \ \\ \gamma \in \Lambda := \omega \mathbb Z \oplus \mathbb Z , \ \ 
\omega K \equiv K \not \equiv 0 \!\!\! \mod \Lambda^* , \ \ 
\Lambda^* :=  \frac {4 \pi i}  {\sqrt 3}  \Lambda , \ \ \langle z , w \rangle := \Re ( z \bar w ) . 
\end{gathered} 
\end{equation}
An example of $ U $  is given by the Bistritzer--MacDonald potential
\begin{equation}
\label{eq:defU2}
U ( z ) =  - \tfrac{4} 3 \pi i \sum_{ \ell = 0 }^2 \omega^\ell e^{ i \langle z , \omega^\ell K \rangle }, \ \ \ K = \tfrac43 \pi  .
\end{equation}
We note that a potential satisfying \eqref{eq:defU} is periodic with respect to the lattice $ 3 \Lambda $
and that we can take
\[    Y := L^2 ( \mathbb C / \Gamma ; \mathbb C^2 ) , \ \ \ 
 X := H^1 ( \mathbb C / \Gamma ; \mathbb C^2 ) , \ \ \  \Gamma := 3 \Lambda . \]
(For the Fredholm property of $ D ( \alpha ) + k : X \to Y $ see 
\cite[Proposition 2.3]{beta}; the index is equal to $ 0 $.) The operators $ W_\bullet ( \gamma ) $
are given by multiplication by $ e^{ i \langle \gamma, z \rangle } $, $ \gamma \in \Gamma^* $,
with $\Gamma^*$ the dual lattice to $ \Gamma $. (The operator is the same but acts on different spaces.)

The self-adjoint Hamiltonian for the chiral model of TBG is given by
\begin{equation}
\label{eq:defH}   H ( \alpha ) := \begin{pmatrix} 0 &  D( \alpha )^* \\
D ( \alpha ) & 0 \end{pmatrix} , \end{equation}
and Bloch--Floquet theory means considering the spectrum of
\begin{equation}
\label{eq:defHk} 
\begin{gathered}
 H_k ( \alpha ) := e^{ - i \langle z, k \rangle } H ( \alpha ) e^{ i \langle z, k \rangle } 
: H^1 ( \mathbb C/\Gamma ; \mathbb C^4 ) \to L^2 ( \mathbb C/\Gamma ; \mathbb C^4 ) , \\
H_k ( \alpha ) = \begin{pmatrix} 0  & Q ( \alpha, k )^* \\
Q ( \alpha, k ) & 0 \end{pmatrix} , \ \ \ Q ( \alpha, k ) = D ( \alpha ) + k ,
\end{gathered}
\end{equation}
see \cite{beta} (we should stress that it is better to consider a modified boundary 
condition \cite{bhz2} rather than $ \Gamma$-periodicity but this plays no role in the discussion here). 

A {\em flat band} at zero energy for the Hamiltonian \eqref{eq:defH} means that 
\begin{equation}
\label{eq:flat}  
\begin{split}  \forall \,  k \in \mathbb C  \ \ 0 \in \Spec_{ L^2 ( \mathbb C/\Gamma; \mathbb C^4 ) } H_k ( \alpha ) 
 & \ \Longleftrightarrow \ \forall \, k \in \mathbb C \ \ 
\ker_{ H^1 ( \mathbb C/\Gamma; \mathbb C^4 )}  H_k ( \alpha ) \neq \{ 0 \} \\
& \ \Longleftrightarrow \ \forall \, k \in \mathbb C \ \ 
\ker_{ H^1 ( \mathbb C/\Gamma; \mathbb C^2 )}  Q  ( \alpha, k ) \neq \{ 0 \} . \end{split} 
\end{equation}

We generalize the result of \cite{beta} stating that the set of $ \alpha$'s for which
\eqref{eq:flat} holds, which we denote by $\mathcal{A}_{\rm{ch}}$, is a discrete subset of $ \mathbb C $ and that \eqref{eq:flat} is
equivalent to 
\begin{equation}
\label{eq:flat1}  \ \exists \, k \in \mathbb C \setminus \Gamma^* \ \ 
\ker_{ H^1 ( \mathbb C/\Gamma; \mathbb C^2 )}  Q  ( \alpha, k ) \neq \{ 0 \} . \end{equation}
The key property in showing this is the existence of protected states \cite{magic}, \cite{beta}: 
\begin{equation}
\label{eq:prot}
\forall \,  \alpha \in \mathbb C , \ 
k \in \Gamma^* \ \  \dim \ker_{ H^1 ( \mathbb C/\Gamma; \mathbb C^2 )}  Q  ( k,  \alpha ) 
\geq 2, \ \ \    \dim \ker_{ H^1 ( \mathbb C/\Gamma; \mathbb C^2 )}  Q  ( k , 0 ) = 2 . 
 \end{equation}
In the treatment of abstract operators \eqref{eq:defQ} we cannot consider 
 the kernel alone as there are many degenerate possibilities -- see \cite[\S C.4]{res}
for a review of the {\em Gohberg--Sigal} theory relevant to this. 
In \eqref{eq:prot} semisimplicity of the spectrum of $ D ( 0 ) $, 
$ Q ( k, 0 ) = D ( 0 ) + k $ was implicit but this does not need to hold for more general
operators, including more general $ D (\alpha ) $ -- see for instance \cite{yang}. Hence we will replace
\eqref{eq:prot} by a different hypothesis (see \eqref{eq:hyp} below) which involves
a more general notion of multiplicity:
We define multiplicity as follows:  if for $ \alpha \in \Omega $,  there is $k_0\in \mathbb{C}$ such that $\ker Q(\alpha, k_0)=\{0\}$, then we define 
 \begin{equation}
\label{eq:mult}
m (\alpha, k ) := \frac{1}{ 2 \pi i } \tr \oint_{\partial D }   Q( \alpha, \zeta ) ^{-1} 
\partial_\zeta Q ( \alpha , \zeta ) d \zeta , 
\end{equation} 
where the integral is over the positively oriented boundary of a disc 
$ D $ which contains $ k$ as the
only possible pole of $ \zeta \mapsto Q ( \alpha, \zeta ) $. 
Otherwise, we  put $ m ( \alpha, k ) =  
\infty $ for all $k\in \mathbb{C}$.

\noindent{\bf Remarks.} 
1. Since $k\mapsto Q(\alpha,k)$ is a holomorphic family of Fredholm operators with index $0$, we observe that  that $\ker Q(\alpha, k_0)=\{0\}$ implies  existence of $Q(\alpha,k_0)^{-1}$ and hence $Q(\alpha,\zeta)^{-1}$ is a meromorphic family of operators -- see
\cite[Theorem C.8]{res}. In particular,~\eqref{eq:mult} is well-defined. Hence we have a dichotomy:
for a fixed $ \alpha $
\begin{equation}
\label{eq:dich}   \forall  \, k  \ \ m ( \alpha , k ) < \infty  \ \ \text{ or } \ \
\forall \, k \ \ m ( \alpha , k ) = \infty . 
\end{equation}

\noindent
2. Assumption~\eqref{eq:lattice} implies that $m(\alpha,k)=m(\alpha,k+\gamma)$ for any $\gamma \in \Gamma^*$.

\noindent
3.  For any $\alpha\in \Omega$ and $ k_1 \in \mathbb{C}$, we have $m(\alpha, k_1 )\geq \dim \ker Q(\alpha, k_1 )$. If $ Q ( \alpha , k ) = P ( \alpha ) + k $,  $ -k_1 $ is a semisimple eigenvalue
of $ P ( \alpha ) $,  and  $m(\alpha,k_1 )<\infty$,
then $m(\alpha,k_1 )=\dim \ker Q(\alpha,k_1 )$. 

\begin{theo}
\label{t:1}  In the notation of \eqref{eq:defQ} and assuming \eqref{eq:lattice} 
suppose that for some  $ \alpha_0 \in \Omega  $ and every $ k \in \mathbb C $, we
have,  
\begin{equation}
\label{eq:hyp} 
\begin{gathered}
m ( \alpha, k  ) \geq m ( \alpha_0, k ) \neq \infty .
\end{gathered} 
\end{equation}
Then there exists a discrete set $ \mathcal A\subset \Omega  $ such that for all $ k \in \mathbb C $
\begin{equation}
\label{eq:magic}   \begin{gathered}
m ( \alpha, k ) = \left\{ \begin{array}{ll} 
\ \ \infty &  \alpha \in \mathcal A, \\
m( \alpha_0, k ) &  \alpha \notin \mathcal A . \end{array} \right. 
\end{gathered}
\end{equation}
\end{theo} 

In view of \eqref{eq:prot} (and semisimplicity -- see Remark 3 above) we see that \eqref{eq:hyp} is satisfied for $ Q $ given in 
\eqref{eq:defD} with 
$ \alpha_0 = 0 $,  $ \Omega  =\mathbb C $ and 
$ m ( 0, k ) = 2 \indic_{ \Gamma^* } ) (k ) $.
 For a direct proof see \cite[\S 3]{beta} or \cite[\S 2]{bhz2}.

\noindent
{\bf Remark.} 
Theorem~\ref{t:1} is valid under a weaker condition than~\eqref{eq:lattice}. 
As seen in \S \ref{s:pr}, we need to control the multiplicity $m(\alpha,k)$ for every $k$
using $m(\alpha,k)$ for $ k $ in some fixed compact set. 
 That some condition is needed (other than holomorphy and the Fredholm property) 
 can be seen by considering the simple example of $Q(\alpha,k)=1-\alpha k$, 
 $ X = Y = \mathbb C $. In this case~\eqref{eq:hyp} is satisfied with $\alpha_0=0$ and $m(\alpha_0,k)=0$. Nevertheless, 
$$
m(\alpha,k)\geq \dim \ker Q(\alpha,k) =\begin{cases}0&k\neq \alpha^{-1}\\ 1&k=\alpha^{-1}\end{cases}
$$ 
and \eqref{eq:magic} fails. We opted for the easy to state condition \eqref{eq:lattice} in view
of the motivation from condensed matter physics.


\section{Proof of Theorem \ref{t:1}}
\label{s:pr}

Define $\mathcal{K}:=\supp m(\alpha_0,\bullet)$, which is 
a discrete set (see Remark 1 above).
We now fix $ k_0 \in \mathbb{C}\setminus \mathcal K$ and define 
\begin{equation}
\label{eq:defAk0} \mathcal A_{k_0}  := \complement \{ \alpha  \in \Omega  :  Q(\alpha, k_0  )^{-1}:Y\to X \text{ exists} \}. \end{equation}
Since $ \alpha \mapsto Q( \alpha , k_0 ) $ is a holomorphic family of Fredholm operators
of index zero, 
and $ \ker  Q( \alpha_0, k_0  ) = \{ 0 \}  $, we conclude that $ \alpha \mapsto Q ( \alpha , k_0 )^{-1} $ is 
a meromorphic family of operators and, in particular, $ \mathcal A_{k_0}  $ is a discrete set --
see \cite[\S C.3]{res}. Also, for 
$ \alpha \notin \mathcal A_{k_0}  $, $ k \mapsto Q ( \alpha, k )^{-1} $ is a meromorphic
family of operators and hence $m(\alpha,k)<\infty$ for all $k\in \mathbb{C}$
For $ D $ as in~\eqref{eq:mult},
there exists $ \varepsilon > 0 $  such that 
\begin{equation}
\label{eq:mult2}
m (  \alpha,k ) = \sum_{ k' \in D } m(  \alpha',k' ) , \ \text{ if $ |\alpha - \alpha' | < \varepsilon $.} 
\end{equation}
In particular for a fixed $ k \in \mathbb{C} $, $ \alpha \mapsto m (  \alpha ,k ) $ is upper semicontinuous.
We now define
\[ U := \{ \alpha \in \Omega \setminus \mathcal A_{k_0}  : \forall \, k,  \  m ( \alpha,k ) = m(\alpha_0,k) \}. \]
We note that
$ \alpha_0\in U  $ and that $ \Omega \setminus \mathcal A_{k_0}  $ is connected. Hence $ U  = \Omega \setminus 
\mathcal A_{k_0}  $ if we show that $U $ is open and closed in the relative topology of 
$ \Omega \setminus  \mathcal A_{k_0}  $.

Let $\alpha \in U$. We start by showing that for any compact subset $K\subset \mathbb{C}$, there 
exists $ \varepsilon_K>0$ such that 
\begin{equation}
\label{e:compactOpen}
m(\alpha',k)= m(\alpha_0,k)=m(\alpha,k)\text{ for all }k\in K\text{ and }|\alpha-\alpha'|<\varepsilon_K.
\end{equation} 
To see this we note that for any fixed $ k \in \mathbb C $
 there exist $D_k = D ( k, \delta_k ) $,   
     and $\varepsilon_k>0$ such that that~\eqref{eq:mult2} holds for $|\alpha-\alpha'|<\varepsilon_k$.   
By shrinking $ D_k $ (and consequently $ \varepsilon_k$) we can assume that (here we use
the discreteness of $ \mathcal K $) 
\begin{equation}
\label{eq:Dkset}      D_k \setminus \{ k \}  \subset \complement \mathcal K. 
\end{equation}
Since $ K $ is compact, we can find a finite cover
$
K\subset \bigcup_{i=1}^N D_{k_i} $. 
Then $k_i$ is the only possible pole for $k\mapsto Q(\alpha,k)^{-1}$ in $D_{k_i}$ 
and for $|\alpha-\alpha'|<\varepsilon_K:=\min_{i=1,\dots N}\varepsilon_{k_i}$, we have
$$
m(\alpha,k_i)=\sum_{k\in D_{k_i}}m(\alpha',k),
$$
and in particular, as $\alpha\in U$,  
$
m(\alpha_0,k_i)=\sum_{k\in \mathcal{D}_{k_i}}m(\alpha',k).
$ 
Since $m(\alpha',k_i)\geq m(\alpha_0,k_i)$ (by the assumption \eqref{eq:hyp}) and $m(\alpha',k)\geq 0$ for all $k$ (since $k\mapsto Q(\alpha,k)$ is holomorphic), we have 
$$
0\geq m(\alpha_0,k_i)-m(\alpha',k_i)=\sum_{k\in D_{k_i}\setminus \{k_i\}}m(\alpha',k)\geq 0.
$$
Hence, $m(\alpha_0,k_i)=m(\alpha',k_i)$ and $m(\alpha',k)=0$ for $k\in D_{k_i}\setminus \{k_i\}$. In particular, $m(\alpha',k)=m(\alpha_0,k)$ for all $k\in K$ and $|\alpha'-\alpha|<\varepsilon_K$ as claimed in \eqref{e:compactOpen}.


Now, to complete the proof that $U$ is open, we use~\eqref{eq:lattice}. Let $K\subset \mathbb{C}$ contain the fundamental domain of $\Gamma^*$ and $\varepsilon_K$ as in~\eqref{e:compactOpen}. Then, for all $k\in \mathbb{C}$, there is $\gamma\in \Gamma^*$ such that $k+\gamma\in K$. Using~\eqref{e:compactOpen}, we have for $|\alpha-\alpha'|<\varepsilon_K$, 
$$
m(\alpha',k+\gamma)=m(\alpha,k+\gamma).
$$
But then, by~\eqref{eq:lattice}
$
m(\alpha',k+\gamma)=m(\alpha',k)$, $ m(\alpha,k+\gamma)=m(\alpha,k)
$,
and hence
$$
m(\alpha',k)=m(\alpha,k)=m(\alpha_0,k).
$$
Since $k\in \mathbb{C}$ was arbitrary, this implies $\alpha'\in U$.

To show that $ U $ is closed suppose that  $ \mathcal A_{k_0} \not \ni \alpha_j \to \alpha \notin \mathcal A_{k_0 } $
and $ m ( \alpha, k_j ) = m(\alpha_0,k) $. Then, since $\alpha\notin \mathcal{A}_{k_0}$, for every $k\in \mathbb{C}$, there exist $\varepsilon_k>0$ and $ D_k $ 
 such that~\eqref{eq:mult2} and \eqref{eq:Dkset} hold. In particular, for $j$ large enough (depending on $k$),
$$
m(\alpha,k)=\sum_{k'\in D_k }m(\alpha_j,k')=\sum_{k'\in D_k}m(\alpha_0,k')=m(\alpha_0,k).
$$
Hence $ U $ is closed and open which means that $ U = \Omega \setminus \mathcal A_{k_0} $.

Recalling the definition \eqref{eq:defAk0},  we proved that
\[ \Omega \setminus \mathcal A_{k_0} \subset \{ \alpha: \forall \, k,  \   m ( \alpha,k ) = m(\alpha_0,k) \} 
\subset \Omega  \setminus \mathcal A_{ k_1 } , \]
for any $ k_1 \notin \mathcal K  $. But this means that $ \mathcal A_{k_0 } $ is independent of $ k_0 $
and for $ \alpha \in \mathcal A := \mathcal A_{k_0} $, $ Q ( \alpha, k )^{-1} $ does not exist for any $ k 
\in \mathbb{C} $. In particular, $m(\alpha,k)=\infty$ for $\alpha\in \mathcal{A}$ and $k\in \mathbb{C}$.
\qed

\section{A scalar model for flat bands}
\label{s:scal}

One of the difficulties of dealing with the model described by \eqref{eq:defD}, \eqref{eq:defH}
is the fact that $ D ( \alpha ) $ acts on $ \mathbb C^2 $-valued functions. Here we propose the following
model in which $ D ( \alpha ) $ is replaced by a scalar (albeit second order) operator. This is done
as follows. We first consider $ P ( \alpha ) : H^2 ( \mathbb C/\Gamma ; \mathbb C^2 ) 
\to L^2 ( \mathbb C/\Gamma ; \mathbb C^2 ) $ defined as follows:
\begin{equation}
\label{eq:defP}
\begin{gathered} 
P ( \alpha ) := D ( - \alpha ) D ( \alpha ) = Q ( \alpha )  \otimes I_{\mathbb C^2} + R ( \alpha ), \ \ \ 
Q ( \alpha ): = ( 2 D_{\bar z } )^2 - \alpha^2 V ( z ) ,  \\
R ( \alpha ) :=  -  \alpha \begin{pmatrix} \ \ 0 &  V_1 ( z )  \\
V_1 ( -z ) & 0   \end{pmatrix} , \ \  V ( z ) := U ( z ) U( -z ) , \ \ V_1 ( z ) := 2 D_{\bar z } U ( z ) .
\end{gathered} 
\end{equation}
If we think of $ P ( \alpha ) $ as a semiclassical differential system with $ h = 1/\alpha $
(see \cite[\S E.1.1]{res}) then $ Q ( \alpha ) $ is the quantization of the determinant of 
the symbol of $ D( \alpha )$ and $ R ( \alpha ) $ is a lower order term.
We lose no information when considering $ P ( \alpha ) $ in the characterization 
of flat bands \eqref{eq:flat}:
\begin{prop}
\label{p:info}
If $  P ( \alpha, k ) := e^{ - i \langle z, k \rangle } P  ( \alpha ) e^{ i \langle z, k \rangle } $
then 
\begin{equation}
\label{eq:P2D}   \ker_{ H^1 ( \mathbb C/\Gamma ) } ( D ( \alpha ) + k ) \neq \{0 \}
\ \Longleftrightarrow \ \ker_{ H^2 ( \mathbb C/\Gamma ) } P ( \alpha, k ) \neq \{ 0 \} . 
\end{equation}
In particular $ \alpha \in \mathcal A_{\rm{ch}} $ if and only if 
$ k  \in \Spec_{ L^2 ( \mathbb C / \Gamma ) } P ( \alpha, k ) $ for some
$ k \notin \Gamma^* $ (which then implies this for all $ k $). 
\end{prop}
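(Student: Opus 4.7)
The plan is to factor $P(\alpha,k)$ into two first-order Dirac-type operators and exploit a parity symmetry that exchanges $D(\alpha)$ and $D(-\alpha)$. Since conjugation by the plane wave $e^{i\langle z,k\rangle}$ is compatible with composition, the definition $P(\alpha)=D(-\alpha)D(\alpha)$ immediately gives
\begin{equation*}
P(\alpha,k) \;=\; \bigl(D(-\alpha)+k\bigr)\bigl(D(\alpha)+k\bigr) \;=\; Q(-\alpha,k)\,Q(\alpha,k).
\end{equation*}

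The ``$\Rightarrow$'' direction of \eqref{eq:P2D} is then immediate: if $u\in H^1$ satisfies $(D(\alpha)+k)u=0$, then elliptic regularity for $2D_{\bar z}$ on $\mathbb C/\Gamma$ puts $u$ in $C^\infty$ (in particular in $H^2$), and the factorization gives $P(\alpha,k)u=0$.

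For the converse I would take $u\in H^2$ with $P(\alpha,k)u=0$ and set $v:=(D(\alpha)+k)u\in H^1$, so that $(D(-\alpha)+k)v=0$. If $v=0$ we are done. The main obstacle is the case $v\neq 0$, since a priori $v$ only produces a kernel element for $Q(-\alpha,k)$, not for $Q(\alpha,k)$. The resolution is the parity matrix $\sigma_3:=\mathrm{diag}(1,-1)$, which by the off-diagonal form of the potential in \eqref{eq:defD} satisfies
\begin{equation*}
\sigma_3\, D(\alpha)\, \sigma_3^{-1} \;=\; D(-\alpha).
\end{equation*}
Hence $w:=\sigma_3 v\in H^1\setminus\{0\}$ lies in $\ker(D(\alpha)+k)$, finishing \eqref{eq:P2D}.

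For the ``in particular'' clause I would combine \eqref{eq:P2D} with the magic-angle characterization \eqref{eq:flat1} for the chiral Dirac family (a consequence of Theorem~\ref{t:1} applied with $\alpha_0=0$ and $m(0,k)=2\indic_{\Gamma^*}(k)$, as noted right after the theorem): both sides are equivalent to $\alpha\in\mathcal A_{\rm ch}$, and once this holds the flat band statement \eqref{eq:flat} automatically upgrades ``some $k\notin\Gamma^*$'' to ``all $k$''. The only genuinely nontrivial moment in the argument is recognizing the $\sigma_3$ parity that sends $\alpha\mapsto-\alpha$; the rest is bookkeeping and elliptic regularity.
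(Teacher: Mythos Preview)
Your argument is correct, and in fact cleaner than the paper's. Both proofs start from the factorization $P(\alpha,k)=(D(-\alpha)+k)(D(\alpha)+k)$ and use elliptic regularity for the forward implication. The difference lies in how the converse is handled when $v:=(D(\alpha)+k)u\neq 0$ produces a kernel element for $D(-\alpha)+k$ rather than for $D(\alpha)+k$.

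You use the diagonal parity $\sigma_3=\mathrm{diag}(1,-1)$, which satisfies $\sigma_3 D(\alpha)\sigma_3^{-1}=D(-\alpha)$ and commutes with the scalar $k$, so $\sigma_3 v$ lands directly in $\ker(D(\alpha)+k)$. This proves \eqref{eq:P2D} for every $k$ at once, without any appeal to the flat-band characterization. The paper instead uses the antilinear-type reflection $\mathscr R\,\binom{u_1}{u_2}(z)=\binom{u_2(-z)}{u_1(-z)}$, which yields $D(-\alpha)-k=-\mathscr R(D(\alpha)+k)\mathscr R$; note the sign flip on $k$. Because of that sign, the paper cannot immediately transport $\ker(D(-\alpha)+k)$ to $\ker(D(\alpha)+k)$, and must invoke the equivalence \eqref{eq:flat}$\Leftrightarrow$\eqref{eq:flat1} to close the loop (passing through $-\alpha\in\mathcal A_{\rm ch}$ and then back). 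Your $\sigma_3$ trick avoids this detour entirely; the paper's $\mathscr R$ symmetry, on the other hand, is the one that generalizes to settings where the potential carries a $z\mapsto -z$ symmetry rather than a purely matrix one. One minor phrasing point: the elliptic regularity you invoke is for the system $D(\alpha)$ (whose principal symbol is $2\bar\zeta\, I_{\mathbb C^2}$), not for the scalar $2D_{\bar z}$ alone, but this is cosmetic.
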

\begin{proof} 
We note that $ P ( \alpha , k ) = ( D ( - \alpha ) + k ) ( D ( \alpha)  + k ) $ and that 
\[   D ( - \alpha ) - k  = - \mathscr R ( D ( \alpha ) + k ) \mathscr R,
 \ \ \ \mathscr R \begin{pmatrix}
u_1 \\ u_2 \end{pmatrix} ( z ) = \begin{pmatrix} u_2 ( -z ) \\
u_1 ( - z ) \end{pmatrix} \]
and hence 
\[  \ker_{ H^1 ( \mathbb C/\Gamma ) }  ( D ( \alpha ) + k ) = 
\mathscr R \ker_{ H^1 ( \mathbb C/\Gamma ) }  ( D ( - \alpha ) - k ) . \]
Since $ D ( \alpha ) $ is elliptic, the elements of the kernels above are in $ C^\infty 
( \mathbb C/\Gamma ) $ and hence $ H^1 $ can be replaced by $ H^s $ for any $ s $
-- see \cite[Theorem 3.33]{res}. Hence if $ \ker_{ H^2 } P ( \alpha, k ) \neq\{ 0 \} $
then either $ \ker_{ H^2 }  ( D ( \alpha ) + k ) = \ker_{H^1 } ( D ( \alpha)  + k )  \neq 
\{ 0 \} $ or $ \ker_{ H^1 }  ( D ( - \alpha ) + k )  \neq 
\{ 0 \} $. If $ k \notin \Gamma^* $ then the equivalence of \eqref{eq:flat1} and 
 \eqref{eq:flat} gives the conclusion. 
\end{proof}

We now consider a model in which we drop the matrix terms in 
\eqref{eq:defQ}, the definition of $ P ( \alpha ) $, and have $ Q ( \alpha ) 
$ act on scalar valued functions. The self-adjoint Hamiltonian corresponding to 
\eqref{eq:defH} is now given by 
\begin{equation}
\label{eq:defHV}
\begin{gathered}
H ( \alpha )  := \begin{pmatrix} 0 & Q ( \alpha )^*  \\
Q ( \alpha ) & 0 \end{pmatrix}, \ \ \ Q ( \alpha ): = ( 2 D_{\bar z } )^2 - \alpha^2 V ( z ) ,  \ \ V \in C^\infty ( \mathbb C ) , \\
  V ( x + \gamma ) = V ( x ) , \ \ \gamma \in \Lambda:= 
\omega \mathbb Z \oplus \mathbb Z , \ \  V ( \omega x ) = \bar \omega V ( x ) , \ \
\omega := e^{ 2 \pi i/3 } . 
\end{gathered}
\end{equation}
The potential is periodic with respect to $ \Lambda $, and hence the usual Floquet theory 
applies:
\begin{equation}
\label{eq:Floquet}
\begin{gathered}
H( \alpha , k ) := \begin{pmatrix} 0 & Q( \alpha , k )^*  \\
Q ( \alpha, k  ) & 0 \end{pmatrix}, \ \ \ Q  ( \alpha , k ): = ( 2 D_{\bar z } + k )^2 - \alpha^2 V ( z ) , \\
\Spec_{L^2 ( \mathbb C ) }  H ( \alpha )  = \bigcup_{ k \in \mathbb C/\Lambda^* } \Spec_{ L^2 ( 
\mathbb C / \Lambda )} H ( \alpha, k  ) , 
\end{gathered}
\end{equation}
where $ \Spec_{ L^2 ( \mathbb C / \Lambda )} H ( \alpha , k ) $ is discrete and 
is symmetric under  $ E \mapsto - E $. Just as for the chiral model of 
TBG, a flat band at zero for a given $ \alpha$ means that 
\[ \forall \, k \in \mathbb C \  \ 0 \in \Spec_{ L^2 ( 
\mathbb C / \Lambda ; \mathbb C^2 )} H ( \alpha , k )  \ \Longleftrightarrow 
\  \forall \, k \in \mathbb C \  \ker_{ H^2 ( 
\mathbb C / \Lambda; \mathbb C  )} Q ( \alpha, k  ) \neq \{ 0 \} .
 \]
As in the chiral model, we take $W_X(\gamma)=W_Y(\gamma)=e^{i\langle \gamma,z\rangle}$, $\gamma\in \Lambda^*$, the dual lattice to obtain~\eqref{eq:lattice}. 
Theorem \ref{t:1} shows that as in the case of \eqref{eq:defH} this happens
for a discrete set of $ \alpha \in \mathbb C $:

\begin{theo}
\label{t:2} 
For $ H $ and $ Q $ given in \eqref{eq:defHV} there exists a discrete set 
$ \mathcal A_{\rm{sc}}  \subset \mathbb C $ such that 
\begin{equation}
\label{eq:t2}
\begin{gathered}
\ker_{ H^2 ( \mathbb C / \Lambda; \mathbb C  )} Q ( \alpha, k ) \neq \{ 0 \} \  \text{ for 
$ \alpha \in \mathcal A_{\rm{sc}} $, $ k \in \mathbb C $, } \\
m(\alpha,k) = 2
\indic_{ \Lambda^*} ( k )  \ \text{ for $ \alpha \notin \mathcal A_{\rm{sc}}   $.}  
\end{gathered}
\end{equation}
\end{theo}
 
This is an immediate consequence of Theorem \ref{t:2} once we establish
\eqref{eq:hyp} with $m(0,k) =2 \indic_{\Lambda^*}(k)$  (i.e. $ \alpha_0 = 0 $). The kernel of
$ Q ( 0 , k ) = 2 ( D_{\bar z } + k )^2 $ , 
on $ H^2 ( \mathbb C/\Lambda ) $ is empty for $ k \notin \Lambda^* $ and
is given by $ \mathbb C e^{ i \langle k , z \rangle } $, when $ k \in \Lambda^* $. 
This gives that $m(0,k)=2\indic_{\Lambda^*}(k)$ after noticing that $D_{\bar{z}}$ is diagonal in the basis $ \{e^{ i \langle k , z \rangle }\}_{k\in \Lambda^*}$ of $H^2(\mathbb{C}/\Lambda;\mathbb{C})$ and $\partial_k(D_{\bar{z}}+k)^2|_{k=k_0} e^{ i \langle k_0 , z \rangle }=0$ for $k_0\in \Lambda^*$.  The second one is provided by 
\begin{prop}
\label{p:prot}
For all $ \alpha \in \mathbb C $  and $ k \in \Lambda^* $, 
$m(\alpha,k)\geq 2$.
\end{prop}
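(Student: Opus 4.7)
The plan is to prove $m(\alpha,0)\geq 2$ for every $\alpha\in\mathbb C$; the general statement for $k\in\Lambda^*$ then follows from the $\Lambda^*$-periodicity of $m(\alpha,\cdot)$ recorded in Remark~2. Setting $\mathcal M:=\{\alpha\in\mathbb C : m(\alpha,\cdot)\equiv\infty\}$, the dichotomy \eqref{eq:dich} makes the bound trivial for $\alpha\in\mathcal M$, so I focus on $\alpha\notin\mathcal M$, where $k\mapsto Q(\alpha,k)^{-1}$ is a meromorphic family. The strategy combines a threefold rotational symmetry in the $k$-variable with conservation of the total multiplicity in a fundamental domain of $\Lambda^*$, capped off by an open-closed argument.

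For the symmetry, let $\mathcal C u(z):=u(\omega z)$. A direct calculation gives $\mathcal C(2D_{\bar z})\mathcal C^{-1}=\omega\cdot 2D_{\bar z}$, while the hypothesis $V(\omega z)=\bar\omega V(z)$ gives $\mathcal C V\mathcal C^{-1}=\bar\omega V$; expanding $Q$ and using $\omega^{-2}\bar\omega=1$ yields
\[
\mathcal C\,Q(\alpha,k)\,\mathcal C^{-1}=\omega^2\,Q(\alpha,\bar\omega k).
\]
Performing the change of variables $\zeta\mapsto\omega\zeta$ in \eqref{eq:mult} and using cyclicity of the trace then gives $m(\alpha,k)=m(\alpha,\omega k)$. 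Hence, viewed on the torus $\mathbb C/\Lambda^*$, the poles of $Q(\alpha,\cdot)^{-1}$ form a $\mathbb Z/3$-invariant set with orbit-constant multiplicity, and orbits have size $1$ (the rotational fixed points, which include $k=0$) or $3$.

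For an upper bound on $m(\alpha,0)$, fix any $k_0\notin\Lambda^*$; then $\alpha\mapsto Q(\alpha,k_0)$ is a holomorphic Fredholm family of index $0$ invertible at $\alpha=0$, so its non-invertibility set is discrete by \cite[\S C.3]{res}, and $\mathcal M$ is contained in it. Therefore $\mathcal M$ is discrete and $\mathbb C\setminus\mathcal M$ is connected. Take a fundamental domain $F$ of $\Lambda^*$ whose boundary avoids the poles of $Q(\alpha_0,\cdot)^{-1}$, and set $N(\alpha):=\sum_{k\in F}m(\alpha,k)$. Isolating the finitely many $\alpha_0$-poles in $F$ in disjoint discs $D_i$, the identity \eqref{eq:mult2} preserves each local contribution under small perturbations of $\alpha$, while joint continuity of the resolvent on the compact set $\overline F\setminus\bigcup D_i$ rules out new poles appearing elsewhere; $\Lambda^*$-periodicity allows the same analysis at $\partial F$. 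Hence $N$ is locally constant on the connected set $\mathbb C\setminus\mathcal M$, so $N\equiv N(0)=2$. Since any size-$3$ orbit contributes at least $3$ to $N$, no such orbits occur, and in particular $m(\alpha,0)\leq N(\alpha)=2$.

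For the open-closed step, set $U:=\{\alpha\in\mathbb C\setminus\mathcal M:\,m(\alpha,0)=2\}$. For openness, take $\alpha_0\in U$ and choose a $\mathbb Z/3$-invariant disc $D$ around $0$ small enough to exclude every other rotational fixed point and every other $\alpha_0$-pole; by \eqref{eq:mult2} the total multiplicity in $D$ is $2$ for all $\alpha$ near $\alpha_0$, and by rotational symmetry it equals $m(\alpha,0)+3j$, forcing $j=0$ and $m(\alpha,0)=2$. For closedness, given $\alpha_n\in U$ with $\alpha_n\to\alpha\in\mathbb C\setminus\mathcal M$, upper semicontinuity of $m(\cdot,0)$ yields $m(\alpha,0)\geq 2$, while the preceding bound yields $m(\alpha,0)\leq 2$. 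Since $0\in U$ and $\mathbb C\setminus\mathcal M$ is connected, $U=\mathbb C\setminus\mathcal M$, finishing the proof. The step I expect to be most delicate is the local constancy of $N$: ruling out the creation of new poles away from the $D_i$ requires uniform invertibility of the resolvent on the compact set $\overline F\setminus\bigcup D_i$, and the boundary identifications on $\partial F$ must be treated carefully via $\Lambda^*$-periodicity.
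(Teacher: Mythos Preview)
Your proof is correct and shares its core mechanism with the paper's argument---the rotational symmetry $m(\alpha,k)=m(\alpha,\omega k)$ combined with an open--closed argument on the connected set where $m(\alpha,\cdot)<\infty$---but it takes a somewhat different route and in fact proves more than required. The paper works with the set $\mathcal B=\{\alpha:\,m(\alpha,0)\equiv 2\ \mathrm{mod}\ 3\}$ and shows it is both open and closed by applying \eqref{eq:mult2} to a small rotationally invariant disc around $0$: the total multiplicity in $D$ always equals $m(\cdot,0)$ plus a multiple of $3$ coming from size-$3$ orbits, so the congruence persists under perturbation and under limits. Together with $m\geq 0$ this yields $m(\alpha,0)\geq 2$, and no global information is needed. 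You instead establish the exact equality $m(\alpha,0)=2$ on $\mathbb C\setminus\mathcal M$, for which you introduce the additional ingredient of the total count $N(\alpha)=\sum_{k\in\mathbb C/\Lambda^*}m(\alpha,k)$, show it is locally constant and equal to $2$, and use this upper bound in the closedness step. This effectively bypasses the abstract Theorem~\ref{t:1} and recovers the dichotomy of Theorem~\ref{t:2} directly. The trade-off is that your argument is longer and requires the torus-counting lemma (with the boundary issues you flag), while the paper's congruence argument is shorter and entirely local near $k=0$; on the other hand your version delivers the sharper conclusion without invoking the general machinery.
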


\begin{proof}
The proof uses the symmetry of $ Q ( \alpha, k ) $ under the action $ k \mapsto \omega k $
in a way similar to its use in \cite{bz1} and \cite{BHWY}. 

We first recall that
\eqref{eq:lattice} implies that $ m ( \alpha, k + \gamma ) = m ( \alpha, k ) $ for $ \gamma \in 
\Gamma^* $ and hence it is enough to show that $ m ( \alpha, 0 ) \geq 2 $ for all $
\alpha $. We then define 
$$
\mathcal{C}:=\{\alpha \in \mathbb{C}\,:\, m(\alpha,k)<\infty,\,\text{ for all }k\in \mathbb{C}\}
= \{ \alpha \, : \, m ( \alpha, k_0 ) < \infty \} ,
$$
where the second equality holds, in view of \eqref{eq:dich}, for any $ k_0 \in \mathbb C $. 
This set is connected, as for $ k_0 \notin \Lambda^* $, $ Q ( \alpha, k_0 )^{-1} $ exists
and hence $ \alpha \mapsto Q ( \alpha, k_0 )^{-1} $ is a meromorphic family of
operators (we use \cite[Theorem C.8]{res} again).

Next, we observe that, 
$$
Q (\alpha,k)\Omega=\omega \Omega   Q(\alpha,\omega k), \ \ \
\Omega u (z):=u(\omega z), 
$$
and that gives,  for $\alpha\in\mathcal{C}$, 
\begin{equation}
\label{eq:omk}
m(\alpha,k)=m(\alpha,\omega k)=m(\alpha,\omega^2k). 
\end{equation}
We now let
$$
\mathcal{B}:=\{ \alpha\in\mathcal{C}\,:\, m(\alpha,0)=2 \!\!\! \mod \! 3\}.
$$
and claim that $\mathcal{B}= \mathcal{C}$. This will finish the proof since $m(\alpha,k)\geq 0$ implies that 
$
\mathcal{B}\subset \{ \alpha\in\mathcal{C}\,:\, m(\alpha,0)\geq 2\}.
$
Since $0\in\mathcal{B}$ and $\mathcal{C}$ is connected, to show that $\mathcal{B}=\mathcal{C}$, it is enough to show that $\mathcal{B}\subset\mathcal{C}$ is open and closed in the relative topology
of $\mathcal C$.

We start by showing that $\mathcal{B}$ is open and for that choose $\alpha_0\in \mathcal{B}$. Then,
in view of \eqref{eq:mult2} and \eqref{eq:omk}, there exists a disk,  $D$,  around $0$ and $\varepsilon >0$ such that for $|\alpha-\alpha_0|<\varepsilon$, 
\begin{align*}
2=m(\alpha_0,0)&=\sum_{k\in D}m(\alpha,k)
=m(\alpha,0)+\sum_{k\in D\setminus \{0\}}m(\alpha,k)
\\ &
=m(\alpha,0)+3\sum_{\substack{k\in D\setminus \{0\}\\ 0\leq \arg k<\frac{2\pi}{3}}}m(\alpha,k).
\end{align*}
It follows that $m(\alpha,0)=2\!\!\! \mod \! 3$ which implies that 
$\alpha\in\mathcal{B}$ for $|\alpha-\alpha_0|< \varepsilon$, that is,   $\mathcal{B}$ is open as claimed. 

Next, we show $\mathcal{B}$ is closed. To see this, suppose that $\alpha_j\in \mathcal{B}$ with $\alpha_j\to \alpha\in \mathcal{C}$. Then, for $j$ large enough, \eqref{eq:mult2} gives
\begin{align*}
m(\alpha,0)&=\sum_{k\in D}m(\alpha_j,k) 
=m(\alpha_j,0)+\sum_{k\in D\setminus \{0\}}m(\alpha_j,k)
=2+3\sum_{\substack{k\in D\setminus \{0\}\\ 0\leq \arg k<\frac{2\pi}{3}}}m(\alpha_j,k).
\end{align*}
Hence, $m(\alpha,0)=2\!\!\!\!\ \mod \! 3$, that is $\mathcal{B}$ is closed. 
\end{proof}

\noindent
{\bf Remarks.} 1. The proof of Theorem \ref{t:1} 
also shows the following spectral characterization of $\mathcal A_{\rm{sc}} $: if
\begin{equation}
\label{eq:defT}
T_k :=  ( 2 D_{\bar z } + k)^{-2} V , \ \ \ k \notin \Lambda^* , 
\end{equation}
then 
\begin{equation}
\label{eq:spectral}
\begin{split} \alpha \in \mathcal A_{\rm{sc}}  \ &  \Longleftrightarrow  \ \exists \, k \notin \Lambda^*   \ \
\alpha^{-2} \in \Spec_{ L^2 ( \mathbb C/\Lambda ) } T_k \\
\ &   \Longleftrightarrow \  \forall \, k \notin \Lambda^*  \  \
\alpha^{-2} \in \Spec_{ L^2 ( \mathbb C/\Lambda ) } T_k  , \end{split} 
\end{equation}
Using the methods of \cite{bhz1} one can show that for $ V ( z ) = U ( z ) U ( - z )$ 
with $ U $ given by \eqref{eq:defU2} (or for more general classes of potentials described
in \cite{bhz1}),  $ \tr T_{k}^{p} \in ( \pi/\sqrt 3 ) \mathbb Q $, $ p \geq 2 $. Together
with a calculation for $ p = 2 $ (as in \cite{beta}) this
shows that $ | \mathcal A_{\rm{sc}} | =\infty $. With numerical assistance one can also show
existence of a real $ \alpha \in \mathcal A_{\rm{sc}} $.

\noindent 2. We can strengthen Proposition \ref{p:prot} as in \cite[Proposition 2.3]{bhz2}:
there exists a holomorphic family $ \mathbb C \ni \alpha \mapsto u ( \alpha ) \not \equiv 0 $, 
such that $  u ( 0 ) = 1 $ and $ Q ( \alpha, 0 ) u ( \alpha ) = 0 $. 

\section{Numerical observations} 

The spectral characterization \eqref{eq:spectral} allows for an accurate computation
of $ \alpha$'s for which \eqref{eq:defHV} exhibits flat bands at energy $ 0 $. 
For large $ \alpha$'s however, pseudospectral effects described in \cite{beta} 
make calculations unreliable. The set (shown as  \blue{$\bullet$}) 
 $ \mathcal A_{\rm{sc}} \cap \{ \Re \alpha \geq 0 \} $
where $\mathcal A_{\rm{sc}} $ is given in Theorem \ref{t:2} looks as follows
(for comparison we show the corresponding set, $ \mathcal{A}_{\rm{ch}}$, for the chiral model $\circ$):

\begin{center}
\includegraphics[width=15.5cm]{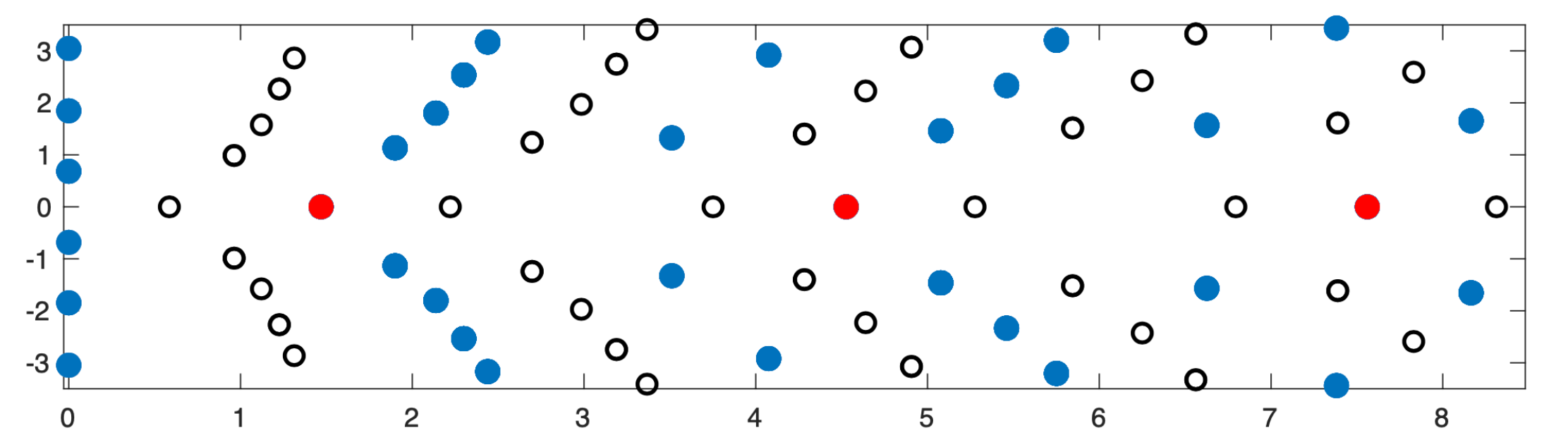}
\end{center} 

The real elements of $ \mathcal A_{\rm{sc}} $ are shown as \red{$\bullet$}. They appear to have multiplicity 
two. An adaptation of the theta function argument \cite{dun}, \cite{magic}, \cite{beta}, \cite[\S 3.2]{bhz2}
should apply to this case and the evenness of eigenfunctions in Proposition \ref{p:prot} shows that
they have (at least) two zeros at $ \alpha \in \mathcal A_{\rm{sc}}$. That implies multiplicity of at least $ 2 $.
This is illustrated by an animation \url{https://math.berkeley.edu/~zworski/scalar_magic.mp4} (shown
in the coordinates of \cite{beta}). When we interpolate between the chiral model and the 
scalar model, the multiplicity two real $ \alpha$'s  split and travel in opposite directions to become
magic $ \alpha $'s for the chiral model: see \url{https://math.berkeley.edu/~zworski/Spec.mp4}. 

One of the most striking observations made in \cite{magic} was a quantization rule for 
real elements of $ \mathcal A_{\rm{ch}} $ with the exact potential
\eqref{eq:defU}: if $ \alpha_1 < \alpha_2 < \cdots \alpha_j < \cdots $ is the 
 sequence of all real $ \alpha$'s for which \eqref{eq:flat} holds, then 
 \begin{equation}
 \label{eq:quant}  \alpha_{j+1} - \alpha_j  = \gamma + o ( 1 ) , \ \ j \to + \infty , \ \ \gamma \simeq \tfrac32.
 \end{equation}
The more accurate computations made in \cite{beta} suggests that 
$  \gamma \simeq 1.515 $. 

In the scalar model \eqref{eq:defHV} with $ V( z ) = U ( z ) U ( -z ) $ where 
$ U $ is given by \eqref{eq:defU} we numerically observe the following rule for real elements of $ \mathcal A_{\rm{sc}}$:
\begin{equation}
 \label{eq:quants}  \alpha_{j+1} - \alpha_j  = 2 \gamma + o ( 1 ) , \ \ j \to + \infty ,  \end{equation}
where $ \gamma $ is the same as in \eqref{eq:quant}. 

 \smallsection{Acknowledgements} 
We would like to thank Simon Becker for help with matlab and in particular for 
producing the movies referred to above. 
JG acknowledges support from EPSRC grants EP/V001760/1 and EP/V051636/1 and 
MZ from the NSF grant DMS-1901462
and the Simons Foundation under a ``Moir\'e Materials Magic" grant.

\end{document}